\documentclass[letter,11pt]{article}

\newcounter{defn}

\usepackage{amsmath}

\usepackage{comment}
\usepackage[linktocpage=true]{hyperref}
\usepackage[margin=1.2 in, top=1 in, bottom= 1.2 in]{geometry}

\usepackage[english]{babel}
\usepackage{amsfonts}
\usepackage{mathrsfs}
\usepackage[mathscr]{euscript}
\usepackage{bbm}
\usepackage{latexsym}
\usepackage{math dots}
\usepackage{amssymb}
\usepackage{mathtools}
\usepackage{graphicx}
\usepackage{tikz}

\usepackage{enumitem}
\setlist{nolistsep}
\usepackage{amsthm}
\usepackage{relsize}
\usepackage{nicefrac}
\usepackage[capitalize]{cleveref}

\newtheoremstyle{plain}{3mm}{3mm}{\slshape}{}{\bfseries}{.}{.5em}{}
\newtheoremstyle{definition}{2mm}{2mm}{}{}{\bfseries}{.}{.5em}{}
\theoremstyle{plain}
	
\newtheorem{theorem}{Theorem}

\newtheorem{corollary}[theorem]{Corollary}

\theoremstyle{definition}
\newtheorem{definition}[defn]{Definition}

\theoremstyle{plain}
\newtheorem*{namedthm}{\namedthmname}
\newcounter{namedthm}
\makeatletter
	
\makeatother


\newcommand{\eps}{\epsilon}

\title{Ergodicity of a surgered flow on unit tangent bundle of hyperbolic surface}
\author{Aritro Pathak}

\begin{document}

\maketitle
\begin{abstract}
    Starting with a trivial periodic flow on $\mathbb{S}M$, the unit tangent bundle of a genus two surface, we perform a Dehn-type surgery on the manifold around a tubular neighborhood of a curve on $\mathbb{S}M$ that projects to a self-intersecting closed geodesic on $M$, to get a surgered flow which restricted to the surgery region is ergodic with respect to the volume measure. The surgered flow projects to a map on the surgery track that can be taken to be a linked twist map with oppositely oriented shears which generates the ergodic behavior for sufficiently strong shears in the surgery.
\end{abstract}

\section{Introduction}

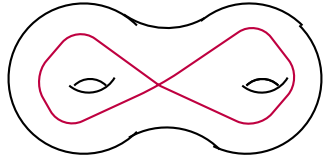
\begin{figure}
\centering
    \begin{tikzpicture}
        \draw [thick] (1,0) arc [radius=1, start angle=45, end angle= 130];
        \draw [thick] (-0.3,0.1) arc [radius=1, start angle=300, end angle= 235];
        \draw [thick] (-1.2,0) arc [radius=1, start angle=45, end angle= 315];
        \draw [thick] (-.1,-1.65) arc [radius=1, start angle=45, end angle= 120];
        \draw [thick] (0.96,0) arc [radius=1, start angle= 45 , end angle= -116];
        \draw [thick] (0.2,-0.8) arc [radius=0.4, start angle=230,end angle=330];
        \draw [thick] (0.68,-0.8) arc [radius=0.3, start angle=45,end angle=135];
        \draw [thick] (-2.1,-0.8) arc [radius=0.4, start angle=230,end angle=330];
        \draw [thick] (0.68,-0.8) arc [radius=0.3, start angle=45,end angle=135];
        \draw [thick] (-1.61,-0.8) arc [radius=0.3, start angle=45,end angle=135];
        \draw [ rounded corners=3mm, thick, purple] (-0.9,-0.8) -- (0.4,-1.4) -- (1.0,-0.7) -- (0.4,0.08) -- (-0.9,-0.8)--(-2.2,-1.4)--(-2.6,-0.7)--(-2.0,-0)--(-0.9,-0.8);
    \end{tikzpicture} 
    \caption{Self-intersecting closed geodesic $\beta:[0,1]\to M $ on the hyperbolic surface $M$ of genus 2. The surgery region in the unit tangent bundle of $M$ is described above, as shown in \cref{fig:fig1}. The surgery procedure leads to hyperbolicity and ergodicity for the part of the altered flow that intersects the surgery neighborhood. However, the altered flow is not mixing}
    \label{fig:hypsur}
\end{figure}


 
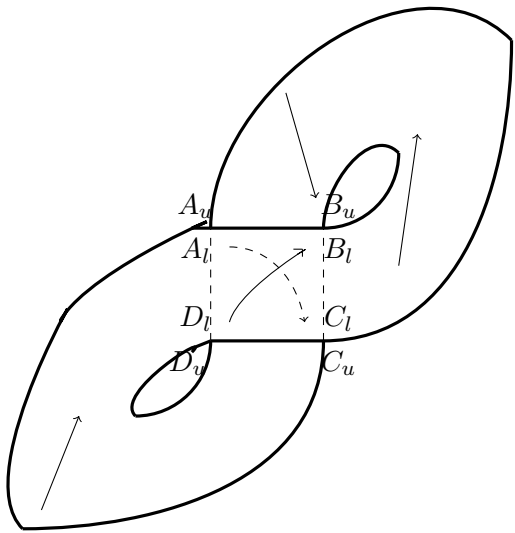
\begin{figure}
\centering
    \begin{tikzpicture}
        \draw[very thick] (0,0) to [out=00,in=270] (1,1);
        \draw[very thick] (1,1) to [out=135,in=90] (0,0);
        \draw[very thick] (0,-1.5) to [out=00,in=270] (2.5,2.5);
        \draw[very thick] (2.5,2.5) to [out=135,in=90] (-1.5,0);
        \draw[very thick] (0,0) to (-1.5,0);
        \draw[very thick] (0,-1.5) to (-1.5,-1.5);
        \draw[dashed] (-1.5,-1.5) to (-1.5,0);
        \draw[dashed] (0,-1.5) to (0,0);
        \draw[very thick] (-1.5,-1.5) to [out=270,in=00] (-2.5,-2.5);
        \draw[very thick] (-2.5,-2.5) to [out=135,in=45] (-1.75,-1.65);
        \draw[very thick] (-1.75,-1.60) to (-1.5,-1.5);
        \draw[very thick] (0,-1.5) to [out=270, in=00](-4,-4);
        \draw[very thick] (-4,-4) to [out=135, in=60](-3.5,-1.25);
        \draw[very thick] (-3.5,-1.23) to [out=60, in=23](-1.75,0);
        \draw[very thick] (-1.75,0) to (-1.5,0);
        \draw (-1.25,-1.25) to [out=75, in=31] (-0.35,-0.35);
        \draw[->] (1,-0.5) to (1.25,1.25);
        \draw[->] (-3.75,-3.75) to (-3.25,-2.50);
        \draw[->] (-0.5,1.8) to (-0.1,0.4);
        \draw (-0.28,-0.28) to (-0.40,-0.28);
        \draw (-0.28,-0.28) to (-0.28,-0.38);
        \draw[dashed, ->] (-1.25,-0.25) to [out=00,in=100] (-0.25,-1.25);
        \node [below] at (-1.7,0) {$A_l$};
        \node [above] at (-1.7,0) {$A_u$};
        \node [above] at (0.20,0) {$B_u$};
        \node [below] at (0.20,0) {$B_l$};
        \node [above] at (-1.7,-1.5) {$D_l$};
        \node [below] at (-1.8,-1.5) {$D_u$};
        \node [above] at (0.2,-1.5) {$C_l$};
        \node [below] at (0.2,-1.5) {$C_u$};
    \end{tikzpicture}
\caption{The surgery region is a two-layered track, with a bottom square region whose vertices are $A_l,B_l,C_l,D_l$, a top square region whose vertices are $A_u,B_u,C_u,D_u$, and two separate lobes that connect these regions. These two square regions lie as two stacks, one on top of the other. The map is a regular shear map on this track, as described in the text, and they are linked over this square region. The arrows show the direction of the shears in those locations. In this analysis, when projected on a plane, the bottom and the top layers of the lobe intersect perpendicularly.}
\label{fig:fig1}
\end{figure}

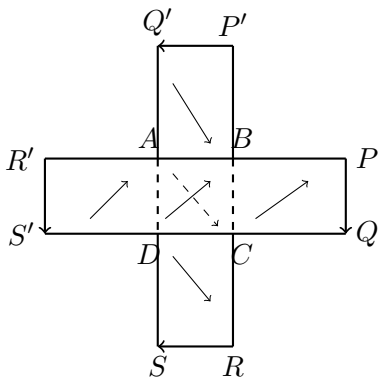
\begin{figure}
\centering
    \begin{tikzpicture}
         \draw[thick] (-2,0.5) to (2,0.5);
         \draw[thick] (-2,-0.5) to (2,-0.5);
         \draw[->,thick] (-2,0.5) to (-2,-0.5);
         \draw[->,thick] (2,0.5) to (2,-0.5);
         \draw[thick] (0.5,0.5) to (0.5,2);
         \draw[thick] (-0.5,0.5) to (-0.5,2);
         \draw[->,thick] (0.5,2) to (-0.5,2);
         \draw[thick] (0.5,-0.5) to (0.5,-2);
         \draw[thick] (-0.5,-0.5) to (-0.5,-2);
         \draw[->,thick] (0.5,-2) to (-0.5,-2);
         \draw[->] (0.8,-0.3) to (1.5,0.2);
         \draw[->] (-0.3,1.5) to (0.2,0.7);
         \draw[->] (-0.3,-0.8) to (0.2,-1.4);
         \draw[->] (-1.4,-0.3) to (-0.9,0.2);
         \draw[->] (-0.4,-0.3) to (0.2,0.2);
         \draw[->,dashed] (-0.3,0.3) to (0.3,-0.4);
         \draw[dashed, thick] (0.5,0.5) to (0.5,-0.5);
         \draw[dashed, thick] (-0.5,0.5) to (-0.5,-0.5);
         \node [above] at (-0.62,0.5) {$A$};
         \node [above] at (0.62,0.5) {$B$};
         \node [below] at (0.62,-0.5) {$C$};
         \node [below] at (-0.62,-0.5) {$D$};
         \node[right] at (2,0.5) {$P$};
         \node[right] at (2,-0.5) {$Q$};
         \node[above] at (0.5,2) {$P'$};
         \node[above] at (-0.5,2) {$Q'$};
         \node[below] at (0.5,-2) {$R$};
         \node[below] at (-0.5,-2) {$S$};
         \node[left] at (-2,-0.5) {$S'$};
         \node[left] at (-2,0.5) {$R'$};

    \end{tikzpicture}
\caption{The linked twist map description of the map from \cref{fig:fig1}. The horizontal track is the region $W=\{(0\leq x \leq 1)\times (y_0\leq y\leq y_1)\}$, the vertical track is the region $V=\{(x_0\leq x \leq x_1)\times (0\leq y\leq 1)\}$. We call $\Tilde{T}=W\cup V$. The map $H=G\cdot F$ is linked on the central square region $S$, by the shear map $F$ on $W$ and a vertical shear map $G$ on $V$. The points $A_l,A_u$ are stacked on top of one another and denoted $A$, and so forth. The pairs $(P,P'),(Q,Q'),(R,R'),(S,S')$ are identified and thus the segments $PQ,P'Q'$ are identified together and so are the segments $SR,S'R'$. The shear on the horizontal strip is denoted by $F$ and that on the vertical strip is denoted by $G$, although the underlying map $H$ is the same. Both the squares $S_1$ and $S_2$ coincide to give the central square $S$ bounded by the points $A,B,C,D$.}
\label{fig:fig2}
\end{figure}

Consider the self-intersecting closed geodesic $\beta:[0,1]\to M$ on the genus two surface $M$ as shown in \cref{fig:hypsur}. This gives an immersed submanifold of $M$ and two points $t_0,t_1 \in [0,1]$ with $\beta(t_0)=\beta(t_1)$, which is the point of intersection. 

Consider $S^{1}=[0,1]/\sim$ identifying $0$ and $1$, and any embedding $S^1 \to \mathbb{S} M$, 
with the following property: each $\theta(t)$ an element of the $\mathbb{S}^{1}$ fiber over $\beta(t)$ in $\mathbb{S}M$, when $t \notin \{t_0,t_1\}$, while $\theta(t_0) \neq \theta(t_1)$ and both $\theta(t_0),\theta(t_1)$ belong to the $\mathbb{S}^{1}$ fiber over the point $\beta(t_0)=\beta(t_1)$.

On $\mathbb{S}M$, this creates a closed curve $\theta$ which avoids intersecting itself and projects under the canonical projection $\pi:\mathbb{S}M\to M$ to the curve $\beta$. Consider an annulus $\mathbb
T$ around the curve $S$ in $\mathbb{S}M$ which creates a two layered track as depicted in \cref{fig:fig1}. We will refer to $\mathbb{T}$ later as the surgery track. For each point $x\in S$, and a small enough local chart $U_x$ of $\mathbb{S}M$ around the point $x$, $\mathbb{T}\cap U_x$ is diffeomorphic to a rectangular strip. As a result of the construction of the set $\mathbb{T}\in \mathbb{S}M$, we have two square regions stacked on top of each other, $S_1$ on top and $S_2$ on bottom, along with two different lobes as shown in \cref{fig:fig1}, and the center of these two squares project under the canonical projection to the point $\beta(t_0)=\beta(t_1)$, and further we orient the surgery track $\mathbb{T}$ in such a way that at every point on $\mathbb{T}$ there is an $\mathbb{S}^{1}$ fiber transverse to $\mathbb{T}$ and passing through the point. For every point in one of the two layers of the double layer region of $\mathbb{T}$, the $\mathbb{S}^{1}$ fiber that passes transversely through the point, also passes $\mathbb{T}$ transversely through exactly one point on the other layer. The shearing squares $S_1,S_2$ are taken to be such that the same $\mathbb{S}^{1}$ fiber over the double layer only intersects $S_{1}$ once and $S_{2}$ once, such as shown in Figure 5. Also, any $\mathbb{S}^{1}$ fiber that intersects one of the lobes of $\mathbb{T}$, intersects it exactly once. 

\begin{definition}
Consider the set of $\mathbb{S}^{1}$ fibers that intersect the shearing track $\mathbb{T}$. The union $\widetilde{S}$ of all these fibers is the surgery region.
\end{definition}

Further, the vertical separation between the two square regions is $d$. Also, we will consider a parametrization for each of the fibers of the surgery region so that the origin on each such fiber is within a distance $d/2$ from the point(s) at which it intersects the the surgery track (either once or twice).

 We will show that the restriction of the altered flow after the surgery, to this surgery region, becomes ergodic, whereas in the complement $\mathbb{S}M\setminus \widetilde{S}$ we still get the trivial periodic flow.
 
We consider an initial trivial periodic flow $\mathbf{T}_t$ on $\mathbb{S}M$, which in local coordinates, on a given $\mathbb{S}^{1}$ fiber passing through the point $ x \in \mathbb{T}$, is simply a unit speed periodic flow $\mathbf{T}_{t}(x,\theta)=\theta+t (\text{mod} 1)$ on the fiber over $x$. Upon performing a modified Dehn surgery on $\mathbb{S}M$ by means of a shear map $\tilde{f}$ on $\mathbb{T}$, which we describe later, we will alter this simple periodic fiber flow; every time the flow encounters the surgery track $\mathbb{T}$ at a point $x\in \mathbb{T}$, the flow is taken to the periodic flow $\mathbf{T}_{t}$ on the fiber that intersects $\mathbb{T}$ at the point $f(x)\in \mathbb{T}$.

We define $\Tilde{f}$ first through the map $f$ on a subset $T$ of the torus below. We consider a shear profile which is linear across $T$, with slope $\alpha$.  

\begin{definition}
$f(x,y)=(x+\alpha (y-y_0),y)$ for $y_0\leq y\leq y_1$ with $\alpha (y_1 -y_0)=k$ for some positive integer $k$, defined on: $T= \{(x,y):0\leq x\leq 1, y_0 \leq y\leq y_1\}$ considered as a subspace of the torus $\mathbb{R}^{2}/\mathbb{Z}^{2}$. We term $k$ the winding number.
\end{definition}

We now twist $T$ into the ``two-layered" track $\mathbb{T}$ of \cref{fig:fig1}. The map is now linked over the double layer, by the periodic fiber flow over this surgered region, as explained above. See Figure 5. The width of the track $w=y_1 -y_0$ is taken to be so that $w\ll 1$.

We show that even though the initial flow is trivial and completely periodic on the unit tangent bundle, upon performing the modified Dehn surgery, the surgered flow is ergodic. 

Upon the modified Dehn surgery, the surgered flow can be described as follows: the flow $\mathbf{T}_{t}$ is taken to be counterclockwise on each fiber of the surgery region and this counterclockwise description is consistent for all the fibers of the surgery region. Whenever the flow encounters the surgery track `from below' at a point $x\in \mathbb{T}$ in a sense that is again consistent across the surgery track, the flow experiences a shear by the map $\Tilde{f}$ to reach $\tilde{f}(x)$ and then leaves the track `above' from the point $\tilde{f}(x)$ in a way that makes sense across the track $\mathbb{T}$. This is true for the double layer region as well. A typical part of the orbit that encounters both the double layers is shown in Figure 5.

As in the picture shown in \cref{fig:fig2}, one can equivalently describe the map $\Tilde{f}$ on the double layer as well as the lobes as a succession of two shears on the domain shown in Figure 3, one horizontally which we term $F$, and the other vertical shear which we term as $G$, so that the map becomes, in the domain $\{(x,y):x\in [0,1],y\in [y_0,y_1]\}$ for $F$ and the domain $\{(x,y): x\in [x_0,x_1],y\in [0,1] \}$ for $G$:

\begin{equation}\label{eq:eq2}
F\cdot
\begin{pmatrix}
x\\
y-y_0
\end{pmatrix}=\begin{pmatrix}
1 & \alpha \\
0 & 1 
\end{pmatrix} \cdot
\begin{pmatrix}
x\\
y-y_0
\end{pmatrix},
\end{equation}

\begin{equation}\label{eq:eq2}
G\cdot
\begin{pmatrix}
x-x_0\\
y
\end{pmatrix}=\begin{pmatrix}
1 & 0 \\
-\alpha & 1 
\end{pmatrix} \cdot
\begin{pmatrix}
x-x_0\\
y
\end{pmatrix},
\end{equation}

Further, the map $F$ is the identity in the region $[0,y_0]\cup [y_1,1]$ and the map $G$ is the identity in the region $[0,x_0]\cup [x_1,1]$. The maps $F$ and $G$ link together in the central square region with the vertices $ABCD$. 

The surgered flow restricted to the surgery region is called $\Psi$. With the linked twist map description, both the lower and upper squares coincide and we have the domain of Figure 3, and the orbit of any point $x$ in this domain under the linked twist map $H=G\cdot F$ is actually a subset of the orbit of $x$ under successive horizontal and vertical shears $F,G$. But ergodicity under the map $H$ also obviously gives ergodicity under the map which is a succession of the horizontal and vertical shears. The map $H$ shows hyperbolic behavior.

While in a proper Dehn surgery, one alters the manifold to recover another smooth manifold, as described in \cite{FH2013},\cite{FH2021}, our surgered manifold is not smooth; we only enforce a $C^{0}$ joining of the shear to the boundary of $\mathbb{T}$. We only apply the machinery of uniform hyperbolicity to achieve an ergodic flow in such a surgered manifold.

We refer the reader to the recent manuscripts  \cite{FH2013},\cite{FH2021} for more background for this work. The existence of a Smale horseshoe for the surgered flow is shown in \cite{HH2023}. We also refer the reader to earlier works of \cite{BE, Woj} which establish ergodicity in the linked twist mapping when the twists reinforce each other, and also \cite{Devaney} which establishes the presence of a horseshoe in linked twist mappings.

For the situation of a Dehn surgery with $C^{k}$ boundary shear profile on $\mathbb{T}$, the problem of studying even some basic properties of the corresponding projected map on the shearing track becomes difficult, which would be the object of future analysis.

When we unravel the surgery track, we get the schematic picture in \cref{fig:second} with the left edge of the track in \cref{fig:second} identified with the right edge of the track . The dynamics is under the map $\tilde{f}$, which is linked between the squares $S_1$ and $S_2$, but otherwise is actually just equivalent to the original twist map of Definition 2.


When the shear is made smooth and thus weak enough at the boundary of the shearing region such as in Figure 4(a), with the boundary identification we have, the problem of determining the orbit structure appears to become difficult, unlike in the case considered in \cite{BE} where in the linked twist map the shears in the central region reinforce each other and thus we escape into the bulk of the square $S$ where we again experience reinforcing strong shears. In our opposing identification, orbits can spend a long time in the corners with successively weak shears, and the study of the orbit structure within the central region $S_1 \cup S_2$ becomes difficult.

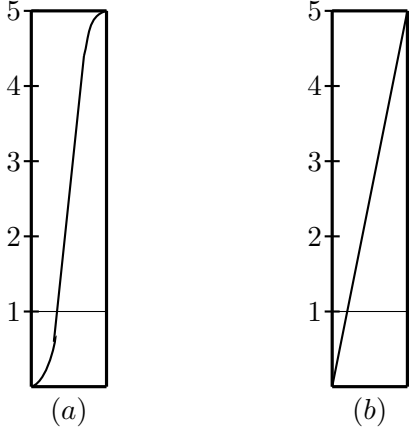
\begin{figure}
\centering
    \begin{tikzpicture}
        \draw[very thick] (0,0) to (1,0);
        \draw[very thick] (1,0) to (1,5);
        \draw[very thick] (0,0) to (0,5);
        \draw[very thick] (0,5) to (1,5);
        \draw[thick] (0.1,1) to (-0.1,1);
        \draw[thick] (0.1,2) to (-0.1,2);
        \draw[thick] (0.1,3) to (-0.1,3);
        \draw[thick] (0.1,4) to (-0.1,4);
        \draw[thick] (0.1,5) to (-0.1,5);    
        \draw[thick] (0,0) to (1,5);
        \draw (0,1) to (1,1);
        \node [left] at (0,1) {$1$};
        \node [left] at (0,2) {$2$};
        \node [left] at (0,3) {$3$};
        \node [left] at (0,4) {$4$};
        \node [left] at (0,5) {$5$};
        \node [below] at (0.5,0) {$(b)$};
        
        \draw[very thick] (-4,0) to (-3,0);
        \draw[very thick] (-3,0) to (-3,5);
        \draw[very thick] (-4,0) to (-4,5);
        \draw[very thick] (-4,5) to (-3,5);
        \draw[thick] (-3.9,1) to (-4.1,1);
        \draw[thick] (-3.9,2) to (-4.1,2);
        \draw[thick] (-3.9,3) to (-4.1,3);
        \draw[thick] (-3.9,4) to (-4.1,4);
        \draw[thick] (-3.9,5) to (-4.1,5);
        \draw (-4,1) to (-3,1);
        \node [left] at (-4,1) {$1$};
        \node [left] at (-4,2) {$2$};
        \node [left] at (-4,3) {$3$};
        \node [left] at (-4,4) {$4$};
        \node [left] at (-4,5) {$5$};
        \node [below] at (-3.5,0) {$(a)$};
        \draw[thick] (-4,0) to [out=15, in=75](-3.7,0.6);
        \draw[thick] (-3.3,4.4) to [out=75, in=195](-3,5);
        \draw[thick] (-3.7,0.6) to (-3.3,4.4);
    \end{tikzpicture} 
    \caption{A uniform shear profile $f$ shown in part(b), whose derivative is discontinuous at the boundary, with $k=5$. A nonuniform shear profile $f$ that is $C^{m}$ (or could be made $C^{\infty}$) at the two boundaries, where also $k=5$, is shown in part (a).}
    \label{fig:third}
\end{figure}

We prove the following theorem in Section 2.

\begin{theorem}
    The map $\tilde{f}$ with the winding number $k\geq 2$ on $\mathbb{T}$ is ergodic with respect to the Lebesgue measure, when the shear parameter $\alpha>6.23$. In fact this map is Bernoulli.\footnote{which is more that what we need.}
\end{theorem}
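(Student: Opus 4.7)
The plan is to recognize $H=G\cdot F$ as a linked twist map with opposing shears on the torus region $\widetilde{T}=W\cup V$ of \cref{fig:fig2}, and to establish ergodicity and the Bernoulli property by combining an invariant-cone / uniform-hyperbolicity argument on the central square $S$ with the Katok--Strelcyn theory for piecewise smooth hyperbolic maps with singularities. The singular set $\mathcal{S}$ of $\tilde f$ is just the finite union of the boundary edges of $W$ and $V$, across which the shear is only joined $C^0$ to the identity; away from $\mathcal{S}$ the map is affine, so the Katok--Strelcyn integrability conditions on $\log \mathrm{dist}(\cdot,\mathcal{S})$ are immediate.

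First I would construct an invariant cone field. On $W\setminus S$ the shear $F$ preserves a horizontal cone and expands vectors in it by a definite factor depending on $\alpha$, and on $V\setminus S$ the shear $G$ preserves a vertical cone with an analogous expansion. On the central square $S$ the two shears compose to give
\[
DH \;=\; \begin{pmatrix}1 & 0\\ -\alpha & 1\end{pmatrix}\begin{pmatrix}1 & \alpha\\ 0 & 1\end{pmatrix} \;=\; \begin{pmatrix}1 & \alpha\\ -\alpha & 1-\alpha^{2}\end{pmatrix},
\]
with trace $2-\alpha^{2}$, which is hyperbolic for $\alpha>2$. The quantitative step is to show that for $\alpha>6.23$ one obtains a \emph{strictly} invariant cone field for $H$ on $\widetilde T$, contracted at a uniform rate along every orbit that visits $S$ infinitely often. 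This is the point at which the precise threshold enters: unlike the reinforcing case of \cite{BE}, \cite{Woj}, here the signs of $F$ and $G$ oppose, so a careless cone choice can be preserved but not strictly contracted, allowing residual elliptic behaviour. A direct calculation of the image of a candidate cone under $DH$, optimising over the cone opening, gives the required threshold as the value of $\alpha$ above which the image cone sits strictly inside the original.

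With strict cone invariance and Katok--Strelcyn in place, Pesin theory provides local stable and unstable manifolds of positive length at almost every point of $\widetilde T$. I would then run the Hopf argument in the linked-twist setting: horizontal unstable leaves inside $W$ are stretched by $F$ across $S$, their images are cut and stretched vertically by $G$ inside $V$, and iteration produces unstable manifolds that eventually cross the whole of $W\cup V$ up to a null set. The standard absolute-continuity of the stable foliation then gives local ergodicity, and the connectedness of $\widetilde T$ upgrades this to global ergodicity of $\tilde f$ with respect to Lebesgue measure. The hardest technical step in this plan is verifying the strict cone contraction on $S$ uniformly in the orientation at which an orbit enters $S$, since the opposing-shear cancellation is what forces the quantitative bound $\alpha>6.23$ rather than the naive $\alpha>2$.

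Finally, to promote ergodicity to the Bernoulli property I would use the fact that a smooth (away from $\mathcal{S}$) non-uniformly hyperbolic map satisfying the Katok--Strelcyn conditions and admitting a strictly invariant cone field is automatically a $K$-system on each ergodic component (via Pesin's $\pi$-partition argument), and that for such systems the Chernov--Haskell theorem (or equivalently the Ornstein--Weiss criterion in this context) gives the Bernoulli property. Combined with the global ergodicity established above, this yields that $\tilde f$ is Bernoulli, which is strictly stronger than the ergodicity needed for the surgered flow $\Psi$ on $\widetilde S$.
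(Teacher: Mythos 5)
There is a genuine gap, and it sits exactly at the step you describe in one sentence as ``iteration produces unstable manifolds that eventually cross the whole of $W\cup V$ up to a null set.'' You attribute the quantitative threshold $\alpha>6.23$ to strict invariance/contraction of a cone field on $S$, but that is not where the difficulty lies: the cone field $C$, $C'$ bounded by the eigendirection slope $L=-\alpha/2+\sqrt{(\alpha/2)^2-1}$ is already strictly invariant, with uniform expansion, for every $\alpha>2$ (the paper verifies this directly from the identity $L=1/(\alpha+L)$, and the eigenvalues of $DH$ are hyperbolic for $\alpha>2$). The real obstruction is that each time a stretched unstable segment returns to the square region it is cut by the edges of $S_1$, $S_2$ into pieces, and with the opposing shears and this boundary identification only a fraction of the expanded length may land back inside a square; a priori the retained length could fail to grow at all. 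The paper's entire quantitative content is a case analysis (the decomposition of the returning segment into $I_1,I_2,I_3,I_4$, the choice of a rational-orbit point $p_1$ on $I_2$ with gap $d$ between consecutive orbit points, and Cases (i)--(v) of the second return) designed to show that at least a factor $\delta>1$ of vertical length is inserted into a square at each return; the inequality $1>\frac{2}{\alpha+L}+\frac{3}{2\alpha+L}+\frac{2}{\alpha(1-\frac{3}{2(\alpha+L)})}$ produced by that analysis is what yields $6.23$. Optimising a cone opening under $DH$ on $S$, as you propose, will only reproduce the hyperbolicity threshold $\alpha>2$ and cannot by itself control the cutting losses, so your plan as stated does not reach the claimed constant and leaves the crossing statement unproved.

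A second, smaller gap: you never use the hypothesis $k\geq 2$ on the winding number. In the paper this is essential for the Bernoulli conclusion --- once some iterate contains a v-segment through $S$, the condition $k\geq 2$ guarantees the next iterate contains both a v-segment and an h-segment, and hence so do all subsequent iterates, which is what feeds the intersection criterion of Theorem~\ref{thm:thm1} for \emph{all} large $m,n$ (and similarly for backward iterates of stable manifolds). The paper also notes that with $k=1$ and related identifications ergodicity can survive while the Bernoulli property is lost, so this hypothesis cannot be omitted from the argument. Your general framework (existence of local invariant manifolds despite the $C^0$ singularity set, a Hopf-type argument, and upgrading to Bernoulli) is consistent with the machinery the paper imports from Przytycki and Burton--Easton, but the quantitative core of the proof is missing.
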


As a result, we establish the main result for the surgered flow in this paper:

\begin{corollary}
    The surgered flow $\Psi$ on $\mathbb{S}M$, restricted to the surgery region, is ergodic with respect to the volume measure.
\end{corollary}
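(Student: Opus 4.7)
The plan is to realize the restriction of the surgered flow $\Psi$ to the surgery region $\widetilde{S}$ as a suspension flow over the base system $(\mathbb{T},\tilde{f})$, and then deduce ergodicity from Theorem~1 via the classical fact that a suspension flow over an ergodic base with integrable roof function is ergodic.

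First I would set up the suspension description. The surgery region $\widetilde{S}$ is foliated by the $\mathbb{S}^{1}$ fibers, each of which is transverse to $\mathbb{T}$ and meets it either once (above the single-layer portion of the track) or twice (above the double-layer portion). After the Dehn-type surgery, the local flow on any fiber remains unit-speed, but whenever the trajectory reaches a point $x\in\mathbb{T}$ it is instantaneously re-routed onto the fiber through $\tilde{f}(x)$, continuing at unit speed on the other side. Consequently $\mathbb{T}$ is a global cross-section for $\Psi|_{\widetilde{S}}$, and the Poincar\'e first-return map is exactly $\tilde{f}$. Let $r:\mathbb{T}\to\mathbb{R}^{+}$ be the corresponding return-time function: on the single-layer portion of $\mathbb{T}$, $r(x)$ equals the full fiber length $1$, while on the double-layer portion it alternately takes the values $d$ and $1-d$ depending on which of the two layers $x$ belongs to. In either case $r$ is measurable and bounded away from $0$ and $\infty$.

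Next I would verify that the volume measure on $\widetilde{S}$ corresponds, under the suspension identification
\[
\widetilde{S}\;\cong\;\{(x,s):\,x\in\mathbb{T},\ 0\le s<r(x)\}\big/\!\sim,
\]
to the product of Lebesgue measure on $\mathbb{T}$ with Lebesgue measure along the fiber coordinate. This is essentially a local-coordinate computation: the original flow $\mathbf{T}_{t}$ is a unit-speed fiber rotation, so the fiber parameter is arclength; the fibers are transverse to $\mathbb{T}$; and the surgery only re-identifies which fiber carries the trajectory without distorting the transverse measure. Hence $\Psi|_{\widetilde{S}}$ is measure-theoretically isomorphic to the suspension of $(\mathbb{T},\tilde{f},\mathrm{Leb})$ under $r$.

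Finally I would invoke the standard suspension principle: if the base map $\tilde{f}$ is ergodic with respect to Lebesgue measure, then the suspension flow built from any positive integrable roof function is ergodic with respect to the normalized product measure. Theorem~1 supplies precisely this ergodicity (in fact Bernoullicity) of $(\mathbb{T},\tilde{f})$ for $\alpha>6.23$ and $k\ge 2$, and the preceding step identifies the suspension measure with the normalized volume on $\widetilde{S}$, giving the corollary. The main obstacle is not dynamical but bookkeeping, namely to check that the double-layer region is handled correctly: one must confirm that the Poincar\'e section is transversally hit at both layers, that the identification of the paired intersection points on a common fiber does not corrupt the suspension structure, and that the two-valued return time $r$ is measurable across the boundary between the single- and double-layer portions of $\mathbb{T}$. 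Once this is in place, the reduction to the textbook suspension-ergodicity statement is routine.
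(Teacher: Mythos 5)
Your proposal is correct and takes essentially the same route as the paper: the paper's own proof also reduces everything to Theorem~1 by projecting a $\Psi$-invariant subset of the surgery region along the $\mathbb{S}^{1}$ fibers to an $\tilde{f}$-invariant subset of $\mathbb{T}$, which is exactly the cross-section/roof-function picture you formalize. Your version is simply the more carefully packaged suspension-flow rendering of that two-line projection argument, including the bookkeeping over the double layer that the paper leaves implicit.
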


\begin{proof}
    Consider any subset $A$ of the surgery region, that is invariant under the surgered flow $\Psi$. The projection of $A$ to the annulus $\mathbb{T}$ is then invariant under the shear map $\tilde{f}$ and thus has Lebesgue measure zero on $\mathbb{T}$ since the map $\tilde{f}$ is ergodic by Theorem 1, and thus the set $A$ itself also has zero volume measure in $\mathbb{S}M$.
\end{proof}

Further, even though the map $f$ on $\mathbb{T}$ has the Bernoulli hence mixing property, we are also able to establish that the surgered flow is not weakly mixing.

\begin{theorem}
    The surgered flow $\psi_t$ is not weakly-mixing.
\end{theorem}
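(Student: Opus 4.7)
The plan is to exhibit a non-constant $L^2$ eigenfunction of the Koopman group $\{f \mapsto f \circ \psi_t\}$, which directly rules out weak mixing. The obstruction comes from the fact that the surgery is purely local: on the complement $U := \mathbb{S}M \setminus \widetilde{S}$ the altered flow coincides with the original trivial periodic flow $\mathbf{T}_t$, and every orbit there is an $\mathbb{S}^{1}$-fiber of the bundle $\pi : \mathbb{S}M \to M$, closed of period $1$. Both $U$ and $\widetilde{S}$ are $\psi_t$-invariant and each of positive (subtotal) volume, so a fiberwise exponential $e^{2\pi i \theta}$ supported on $U$ should survive extension by zero to all of $\mathbb{S}M$ as a genuine Koopman eigenfunction.

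Concretely, I would first pick a Borel section of the principal $\mathbb{S}^{1}$-bundle $\mathbb{S}M \to M$ over $\pi(U)$; such a section exists by a standard measurable selection theorem (for instance Kuratowski--Ryll-Nardzewski), even though no continuous global section exists since the Euler class of the unit tangent bundle of a genus-two surface is nonzero. This produces a Borel measurable fiber-phase $\theta : U \to \R/\Z$, and because $\psi_t|_U = \mathbf{T}_t$ acts as unit-speed rotation on each fiber, one gets $\theta(\psi_t x) = \theta(x) + t \pmod{1}$ for every $x \in U$ and every $t \in \R$. Setting
\[
g(x) \;=\; \mathbf{1}_U(x)\; e^{2\pi i \theta(x)}
\]
then gives the candidate eigenfunction.

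The verification that $g$ is a non-trivial eigenfunction is essentially immediate: $g \in L^\infty(\mathbb{S}M) \subset L^2(\mathbb{S}M)$, the identity $g \circ \psi_t = e^{2\pi i t}\, g$ holds on $U$ by the transformation rule for $\theta$ and on $\widetilde{S}$ because both sides vanish (using $\psi_t\widetilde{S} = \widetilde{S}$), and $g$ is of modulus $1$ on the positive-measure set $U$ while $\int g\, d\mu = 0$ by fiberwise rotation invariance of arc-length on each circle. Thus $g$ is not almost everywhere constant, and $2\pi$ is a nonzero eigenvalue of the infinitesimal generator of $\psi_t$ carrying a nonconstant eigenfunction, contradicting weak mixing.

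The only subtle step is the measurability of $\theta$, handled by the selection theorem; once that is in place the argument is mechanical. Note that the Bernoulli property of $\tilde f$ established in Theorem~1 plays no role here: the obstruction to weak mixing lives entirely in the untouched periodic region $U$, and in fact the same function $g$ witnesses the stronger failure of ergodicity of $\psi_t$ on all of $\mathbb{S}M$, consistent with the fact that the ergodicity claim of Corollary~2 is only for the restriction to the surgery region.
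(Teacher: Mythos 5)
Your construction is internally sound (the measurable section exists, $g=\mathbf{1}_U e^{2\pi i\theta}$ is a bounded nonconstant Koopman eigenfunction, and its existence does rule out weak mixing of the flow on all of $\mathbb{S}M$), but it proves only a trivial reading of the statement and misses the theorem's actual content. As you yourself observe, your eigenfunction really just witnesses the failure of ergodicity of $\psi_t$ on $\mathbb{S}M$: the complement $U$ of the surgery region is invariant of intermediate measure, so non-weak-mixing of the full flow is immediate and needs no eigenfunction at all. The point of the theorem, as the preceding sentence in the paper makes clear (``even though the map $f$ on $\mathbb{T}$ has the Bernoulli hence mixing property, we are also able to establish that the surgered flow is not weakly mixing''), is that the flow \emph{restricted to the surgery region} $\widetilde{S}$ --- precisely where Corollary 2 establishes ergodicity --- still fails to be weakly mixing. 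Your function $g$ vanishes identically on $\widetilde{S}$ and therefore says nothing about that restricted, ergodic system.

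The paper's proof lives entirely inside $\widetilde{S}$: it takes a set $A$ of width $2\eps<d_1$ in the fiber direction (where $d_1$ is the separation of the two layers of the track), notes that the surgered flow only reshuffles fibers at the track and otherwise moves at unit speed along them, so $\Psi_t(A)$ remains a union of pieces of fiber-width $2\eps$ whose positions along the fibers are synchronized; hence for a positive fraction of each time period the image stays a definite distance from the track and misses a second such set $B$, killing weak mixing of the restriction. If you want to salvage your spectral approach, the analogue would be to build a nonconstant eigenfunction \emph{on $\widetilde{S}$} out of the fiber phase --- essentially exploiting that the return time to the track is locally constant, so the restricted flow has a circle-rotation factor --- but that requires checking phase coherence across the fiber-to-fiber jumps induced by $\tilde f$, which is exactly the step your argument avoids by working only on the untouched region $U$.
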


\begin{proof}
Consider the width $d_1$ between the two squares in the surgery region, as shown in the figure. Consider an arbitrary subset of the surgery region of $\mathbb{S}M$  with the following property: the set $A$ projects to a rectangle $R$ (and thus of positive Lebesgue measure on $\mathbb{T}$) lying on any one of the lobes of track $T$, and in local charts on $\mathbb{S}M$ where the coordinates are given by $(t,\theta), t\in [a,b]\times[c,e], \theta\in [0,1] ])$, with $R\subset [a,b]\times [c,e]$, we have $A=R\times [-\eps,\eps]$ with $2\eps <d_1$. In other words, the set is a cube with uniform width $\epsilon$ in the direction of the fibers $\mathbb{S}^{1}$ in a local chart. Consider any other set $B$ that has the same property, also with a width $2\epsilon$. Now consider the flow of the set $A$ under $\Psi$. Since $\epsilon<d_1$,the set $\Psi_t(A)$ can only be decomposed as disjoint unions $\sqcup_{i=1}^{N(t)}A_i(t)$ with each $A_i \subset \mathbb{S}M$, $N(t)\to \infty$ as $t\to \infty$, but where each of the sets $A_i(t)$ continue to have width $2\epsilon<d_1$ in the direction of the fibers.
    
The set $\Psi_t(A)$ as $t\to \infty$ spends a positive fraction in each time interval of length $2\pi$, uniformly a distance $O(\epsilon)$ away from the track $\mathbb{T}$, and thus for a positive fraction of the time, the set $\Psi_t(A)$ also has null intersection with $B$, and the flow is not weakly mixing. 
\end{proof}


\bigskip
From now on it is enough to work in the domain $\tilde{T}:=W\cup V$ of Figure 3 and the map $H$. \footnote{We note that because of our boundary identification, we might end up having a segment in the square $S$ that suffers one horizontal shear and part of this horizontally sheared segment crosses the line $\overline{PQ}$ and enters the top lobe vertically.This does not happen with the usual boundary identifications of \cite{Prz,BE}.}

Following the arguments of \cite{Prz} it suffices to show that $H^{m}(\gamma^{u}(x))$ eventually contains either a vertical or a horizontal segment through the square $S$ for some large enough $m$. Since $k\geq 2$, once we have a vertical segment through $S$ belonging to some $H^{m}(\gamma^{u}(x))$, then because $k\geq 2$ we also have a horizontal segment through $S$ belonging to $F\cdot H^{m}(\gamma^{u}(x))$ and then under one further shear we have another vertical segment belonging to $H^{m+1}(\gamma^{u}(x))$ and then also for all greater integers. The same argument would show that $H^{-n}(\gamma^{s}(y))$ also has either a horizontal or vertical segment for all large enough positive integers $n$. This shows the Bernoulli property for the map, given Theorem 4 below.

Note that the composite shear map $H$ (ref \cite{Prz}) is given, for $\alpha>2$,

\begin{equation}\label{eq:eq1}
H=G\cdot F=\begin{pmatrix}
1 & 0 \\
-\alpha & 1 
\end{pmatrix} \cdot
\begin{pmatrix}
1 & \alpha \\
0 & 1 
\end{pmatrix},
\end{equation}

whereas, the inverse composite shear map is given by 

\begin{equation}\label{eq:eq2}
H^{-1}=F^{-1}\cdot G^{-1}=\begin{pmatrix}
1 & -\alpha \\
0 & 1 
\end{pmatrix} \cdot
\begin{pmatrix}
1 & 0 \\
\alpha & 1 
\end{pmatrix}.
\end{equation}

The eigenvalues of the map $DH$ are given by:

\begin{equation}\label{eq:eq3}
\lambda_{\pm}=\frac{-\alpha^{2}+2\pm \sqrt{\alpha^{4}-4\alpha^{2}}}{2}.
\end{equation}

This shows we have hyperbolicity when $\alpha\geq 2$.

\begin{figure}[h]
\centering
\includegraphics[width=1.1\textwidth]{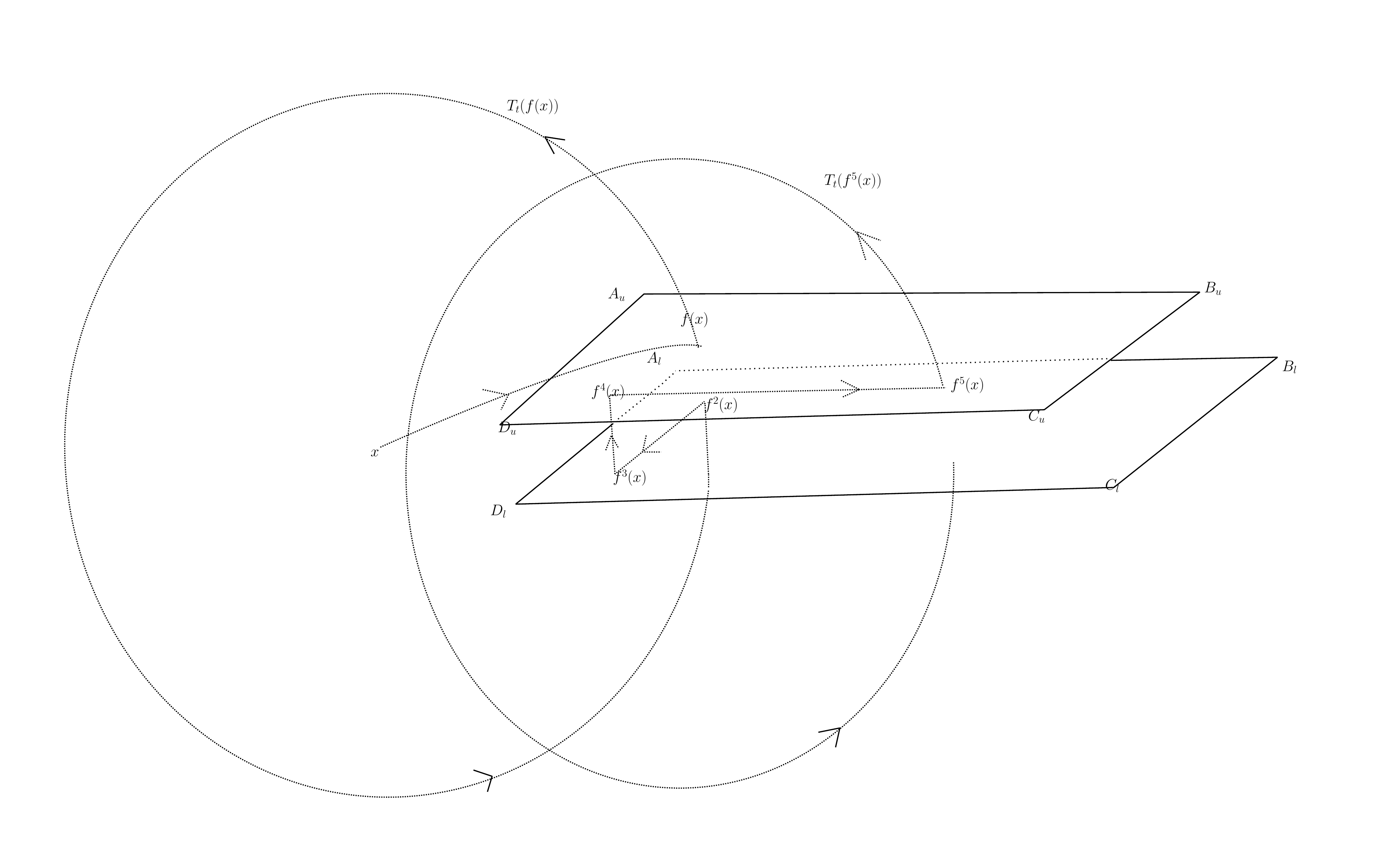}
\caption{The point $x$ moves to $f(x)$ on the top square, and then the ensuing dynamics is shown on the two layered square. The perpendicular distance between the two square layers is given by $d_1$.}
\label{fig:inksone3}
\end{figure}

It is easily verified that the expanding eigenvector $(\psi_1,\psi_2)$ and the contracting eigenvector $(\eta_1,\eta_2)$ satisfy the following relation, 

\begin{equation}\label{eq:eq4}
\frac{\psi_1}{\psi_2}=-\Big(\frac{\alpha}{2}\Big)+\sqrt{\big(\frac{\alpha}{2}\big)^{2}-1} =\frac{\eta_2}{\eta_1}=L
\end{equation}

These above, and the expressions in \cref{eq:eq1} and \cref{eq:eq2} show the symmetry in $\gamma^{s})(x), \gamma^{u}(x)$ under the forward and inverse maps respectively. It will be enough to focus only on the dynamics of the unstable manifold $\gamma^{u}(x)$ under the forward map.

We note that in related work on the linked twist map \cite{Pat}, under the more usual and well studied boundary identifications such as in \cite{Prz,BE} for the linked twist map, we show that one can get ergodicity with $k=1$ but that then the Bernoulli property is lost. The optimal shear parameter for which the ergodicity was achieved in \cite{Prz} is also reduced in \cite{Pat} from the original value of $\alpha=4.15$ to $\alpha=3.47$ in the most general case without any assumptions on the dimensions of the track, but again with the usual boundary identification. One should be able to modify and adopt those arguments to the setting of the boundary identification of Figure 3 here.

Similar to the cases in \cite{BE,Prz}, if not for a rational orbit, the set of points of which has measure zero, every point of $S$ returns again to $S$. By a standard argument, (See Prop 4.4 of \cite{BE} and also see \cite{Prz}) these points return under $H$ to $S$ with positive frequency, and thus Lyapunov exponents exist and local stable and unstable manifolds exist. Then to show ergodicity of $H$ and all its powers, we can use the following theorem.

\begin{theorem}\label{thm:thm1}

For Lebesgue almost every $x,y\in S_1$, denote the local stable and unstable manifolds at $x$ and $y$ respectively by $\gamma^{s}(x) \ \text{and} \ \gamma^{u}(y)$, if $H^{m}(\gamma^{u}(x))$ intersects $H^{-n}(\gamma^{s}(y))$ for all positive integers $m,n$ large enough, then $H$ and all its powers are ergodic. 

\end{theorem}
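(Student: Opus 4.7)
The plan is to apply the Hopf argument adapted to the (non-uniformly) hyperbolic setting established in the paragraph preceding the theorem. The three ingredients already noted — existence of Lyapunov exponents a.e., existence of local stable and unstable manifolds $\gamma^{s}(x), \gamma^{u}(x)$ a.e., and positive frequency of returns to $S$ — are exactly what make the Hopf machinery go through.

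First I would establish the usual Hopf dichotomy for Birkhoff averages. For any $\phi \in C(\tilde{T})$, the forward and backward averages
\[
\bar{\phi}^{+}(x) = \lim_{N\to\infty}\frac{1}{N}\sum_{k=0}^{N-1}\phi(H^{k}x),\qquad \bar{\phi}^{-}(x) = \lim_{N\to\infty}\frac{1}{N}\sum_{k=0}^{N-1}\phi(H^{-k}x)
\]
exist a.e.\ and agree a.e.; call the common value $\bar{\phi}$, which is $H$-invariant a.e. Using Lusin's theorem on a set of measure $1-\eps$ on which $\phi$ is uniformly continuous, together with the fact (from Birkhoff applied to the indicator of that set) that a.e.\ orbit visits it with positive frequency, one gets the standard conclusion that $\bar{\phi}^{+}$ is constant along a.e.\ local stable manifold and $\bar{\phi}^{-}$ is constant along a.e.\ local unstable manifold; hence $\bar{\phi}$ is constant along both families.

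Next, I would use the intersection hypothesis to link two generic points. Given $x, y \in S_{1}$ in the full-measure set on which the hypothesis holds, pick $m, n$ large enough and $z \in H^{m}(\gamma^{u}(x)) \cap H^{-n}(\gamma^{s}(y))$. Since $H^{-m}(z) \in \gamma^{u}(x)$, constancy of $\bar{\phi}$ on $\gamma^{u}(x)$ and $H$-invariance yield $\bar{\phi}(z) = \bar{\phi}(x)$; symmetrically $H^{n}(z) \in \gamma^{s}(y)$ gives $\bar{\phi}(z) = \bar{\phi}(y)$. Hence $\bar{\phi}(x) = \bar{\phi}(y)$ for a.e.\ pair, so $\bar{\phi}$ is a.e.\ constant. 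As this holds for every $\phi \in C(\tilde{T})$, every $H$-invariant $L^{\infty}$ function is a.e.\ constant, which is ergodicity of $H$.

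For ergodicity of $H^{k}$ with $k \ge 1$ I would rerun the same argument with $H^{k}$ in place of $H$: the local stable and unstable manifolds of $H^{k}$ coincide with those of $H$, and the hypothesis ``for all sufficiently large $m, n$'' immediately specializes to the subsequence $m = km'$, $n = kn'$, so the linking step passes verbatim. The one delicate point — shared with every Hopf-style argument in a merely measurable (rather than uniform) hyperbolic setting — is the bookkeeping in step two: one must check that the Lusin set of uniform continuity, the full-measure set of existence of $\gamma^{s},\gamma^{u}$, and the a.e.\ coincidence $\bar{\phi}^{+} = \bar{\phi}^{-}$ can be combined coherently so that the set of linkable pairs $(x,y)$ has full measure in $S_{1}\times S_{1}$. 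Given the absolute-continuity properties of the stable and unstable foliations (which follow from the uniform hyperbolicity of $H$ on $S_{1}\cup S_{2}$), this reduces to a standard Fubini argument and is routine.
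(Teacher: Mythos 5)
The paper does not actually prove this theorem: it is stated as a tool and justified only by the pointer to Prop.~4.4 of Burton--Easton and to Przytycki, where the Hopf-type argument is carried out for linked twist maps. Your proposal reconstructs essentially that standard argument, and it is correct in outline: Birkhoff averages of continuous observables, constancy of $\bar{\phi}^{+}$ along local stable leaves and of $\bar{\phi}^{-}$ along local unstable leaves (here one does not even need Lusin for the constancy step, since points on the same local leaf are forward- resp.\ backward-asymptotic and $\phi$ is uniformly continuous on the compact track), the linking of generic pairs through a point of $H^{m}(\gamma^{u}(x))\cap H^{-n}(\gamma^{s}(y))$, and the specialization to $H^{k}$ via $m=km'$, $n=kn'$. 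The one place where your sketch stops short of a proof is exactly the place you flag: the intersection point $z$ may be a single point, and the identities $\bar{\phi}^{-}(H^{-m}z)=\bar{\phi}^{-}(x)$, $\bar{\phi}^{+}(H^{n}z)=\bar{\phi}^{+}(y)$ and $\bar{\phi}^{+}(z)=\bar{\phi}^{-}(z)$ each hold only off a null set, so one must show that for a.e.\ pair $(x,y)$ a \emph{good} intersection point can be chosen. This is where Burton--Easton and Przytycki invest their effort, using absolute continuity of the two foliations to saturate an invariant set by both families of leaves mod~$0$; in the present setting the leaves are straight segments of fixed slopes $L$ and $1/L$ and the invariant measure is Lebesgue, so the Fubini argument you invoke does close the gap. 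In short: correct approach, same as the cited sources, with the genuinely delicate step acknowledged but delegated rather than executed.
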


\section{Proof of Theorem 1.}
\subsection{The dynamics of the linked shear map.}

Given that we only consider linear shears, as stated earlier for any point $x$, the stable and unstable manifolds $\gamma^{s}(x),\gamma^{u}(x)$ are linear segments whose slopes are respectively $L$ and $1/L$ from \cref{eq:eq4}. The unstable manifold is a linear segment lying on the left boundary of the cone $C$ given by $\{(v,w):L\leq v/w\leq 0\}$, because of \cref{eq:eq4}. 

Consider points in $S$. Under successive shears in the lobes, eventually the image under $H$ of this segment either enters the square $S$ vertically in \cref{fig:fig2}, again within a cone $C$ upon entry, or enters $S$ from the left in \cref{fig:fig1} horizontally within the cone $C'$, which is a rotation by $\pi/2$ of $C$. 

Further, we assume that the dimensions of the central square regions are small compared to the total length of the unfolded track (which is taken to be of unit length). In particular, in the ensuing analysis when we talk about points just to the left or right of the edges of the square region, they will be understood without any difficulty, since the lobes are considered large enough so that no ambiguity arises about these notions of `left' and 'right' near the boundary of the square regions.

Under exactly one shear, the boundary of $C$ given by the line $v/w=L$ maps exactly to the corresponding boundary of $C'$, whereas after a sufficiently large number of shears in the lobes, the images of the unstable manifolds are within the interior of the cone $C$ or $C'$. This follows from \cref{eq:eq4}, which is equivalent to $L=1/(\alpha+L)$ which shows that under one iteration of a horizontal shear , the boundary of $C$ is mapped exactly to the boundary of $C'$, and under further horizontal(vertical) shears, a segment in $C'(C)$ gets mapped within $C'(C)$, and upon further iterations, a segment in the interior of $C(C')$ under a horizontal(vertical) shear gets mapped into the interior of the cone $C'(C)$.

\begin{figure}[h]
\centering
    \begin{tikzpicture}[scale=0.80]
        \draw[very thick] (-7,0) to (-9,3);
        \draw[thick] (-7,0) to (-7,3);
        \node [below] at (-8,3) {Cone $C$};
        \draw[very thick] (-4,0) to (3,3);
        \draw[thick] (-4,0) to (3,0);
        \node [right] at (2,1) {Cone $C'$};
        \draw[->] (-7.6,1) to [out=45, in=135](-0.35,1.6);
        
    \end{tikzpicture} 
    \caption{Passing from the vertical cone $C$ to the cone $C'$, through the map $H$ in the square region $S$. The left edge of the cone $C$ is mapped to the top edge of the cone $C'$ as shown in the above figure. Similarly, when one passes from the cone $C'$ to the cone $C$, the top edge of $C'$ gets mapped to the left edge of $C$.}
    \label{fig:fourth}
\end{figure}
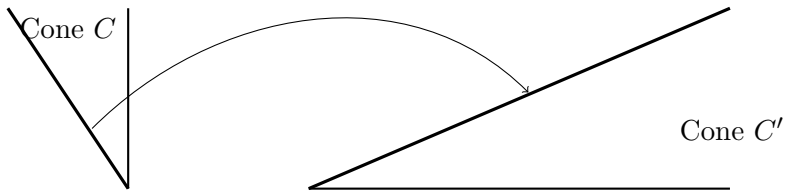

We ensure the shear parameter is large enough that eventually for all large enough iterations $n$ of the map, we have a v- segment through $S$, contained in $H^{n}(\gamma^{u}(x))$. Following the notation of \cite{Prz}, by an h-segment we mean a straight line segment that intersects both the right and left sides of $S$. A v-segment is a straight line segment that intersects both the top and bottom sides of $S$. For sake of simplicity of notation, consider the first return map $H_s$ to $S$. 

Start with any segment $\gamma\subset H_{s}^{p}(\gamma^{u}(x))$ in $S$, within the cone $C$, with vertical length $l_v(\gamma)$, for any positive integer $p$. Whenever a segment enters $S$ vertically under the map $H_s$, it enters within the cone $C$, and when a segment enters $S$ horizontally it does so within the cone $C'$, and we will ensure that eventually for some integers $\tilde{n_0},n_0>0$ a segment $L_1\subset H^{\tilde{n_0}}(\gamma) \subset H^{n_0}(\gamma^{u}(x)) $ inserted into $S$ has vertical length $l_v(L_1)>\delta l_v(\gamma)$, i.e. greater than the length of the original segment by a multiplicative factor of $\delta>1$ (here $\delta$ is uniform constant, independent of the segments $\gamma$), or eventually for some integer $m_0$ a segment $L_2\subset F\cdot H^{m_0}(\gamma^{u})(x)$ inserted into $S$ which has horizontal length $l_v(L_1)>\delta l_v(\gamma)$, i.e. greater than the length of the original segment by a multiplicative factor of $\delta>1$.




Because of the exponential growth of the length of the segments above, eventually we will get a large enough segment through $S$ that is either a vertical v-segment or a horizontal h-segment. In the next iteration of the map, as noted earlier, because $k\geq 2$, we achieve both vertical and horizontal segments in $S$.

Note that when we get both v-segments and h-segments in $S$ under the map $H$ in the domain $\tilde{T}$ of Figure 3, we correspondingly get both v-segments and h-segments in both the top and bottom layers $S_1,S_2$ of $\mathbb{T}$ under the map $\tilde{f}$. 

\begin{figure}
\centering
    \begin{tikzpicture}
        \draw[very thick] (-7,1.5) to (-7,-1.5);
        \draw[very thick] (6,1.5) to (6,-1.5);
        \draw[very thick] (8,1.5) to (8,-1.5);
        \draw[very thick] (8,1.5) to (-7,1.5);
        \draw[very thick] (8,-1.5) to (-7,-1.5);
        \draw[very thick] (4,-1.5) to (4,1.5);
        \draw[very thick] (-3,-1.5) to (-3,1.5);
        \draw[very thick] (-1,-1.5) to (-1,1.5);
        \node [above] at (-2,-1) {$S_1$};
        \node [above] at (5,-1) {$S_2$};
        \node [below] at (2.67,0) {$p_1$};
        \node [below] at (-4.43,0) {$p_2$};
        \draw [thick] (-4.47,-0.1) to (-4.40,0.1);
        \draw [thick] (-4.47,0.1) to (-4.40,-0.1);
        \draw [very thick] (-2.64,0.75) to (-2.14,-0.4);
        \draw [very thick] (1.7,-0.4) to (4.5,0.75);
        \draw [fill] (-5.3,0) circle [radius=0.05];
        \draw [fill] (-5.8,0) circle [radius=0.05];
        \draw [fill] (-6.3,0) circle [radius=0.05];
        \draw [fill] (-6.8,0) circle [radius=0.05];
        \draw [fill] (-4.3,0) circle [radius=0.05];
        \draw [fill] (-4.8,0) circle [radius=0.05];
        \draw [fill] (-3.8,0) circle [radius=0.05];
        \draw [fill] (-3.3,0) circle [radius=0.05];
        \draw [fill] (-2.8,0) circle [radius=0.05];
        \draw [fill] (-2.3,0) circle [radius=0.05];
        \draw [fill] (-1.8,0) circle [radius=0.05];
        \draw [fill] (-1.3,0) circle [radius=0.05];
        \draw [fill] (-0.8,0) circle [radius=0.05];
        \draw [fill] (-0.3,0) circle [radius=0.05];
        \draw [fill] (0.2,0) circle [radius=0.05];
        \draw [fill] (0.7,0) circle [radius=0.05];
        \draw [fill] (1.2,0) circle [radius=0.05];
        \draw [fill] (1.7,0) circle [radius=0.05];
        \draw [fill] (2.2,0) circle [radius=0.05];
        \draw [thick] (2.7,-0.1) to (2.7,0.1);
        \draw [thick] (2.7,0.1) to (2.63,-0.1);
        \draw [fill] (3.2,0) circle [radius=0.05];
        \draw [fill] (3.7,0) circle [radius=0.05];
        \draw [fill] (4.2,0) circle [radius=0.05];
        \draw [fill] (4.7,0) circle [radius=0.05];
        \draw [fill] (5.2,0) circle [radius=0.05];
        \draw [fill] (5.7,0) circle [radius=0.05];
        \draw [fill] (6.2,0) circle [radius=0.05];
        \draw [fill] (6.7,0) circle [radius=0.05];
        \draw [fill] (7.2,0) circle [radius=0.05];
        \draw [fill] (7.7,0) circle [radius=0.05];
        \draw [<->] (2.67, -0.8) to (4.0,-0.8);
        \node [below] at (3.3,-0.8) {$D$};
        \draw [dashed] (2.67,-0.8) to (2.67,0);
        \node [left] at (-2.9,1.2) {$LE_1$};
        \node [right] at (-1,1.2) {$RE_1$};
        \node [left] at (6,1.2) {$RE_2$};
        \node [left] at (4,1.2) {$LE_2$};
        \node [below] at (-1.3,0) {$R_1$};
        \node [below] at (-0.7,0) {$T_1$};
        \node [below] at (5.7,0) {$R_2$};
        \node [below] at (6.3,0) {$T_2$};

    \end{tikzpicture} 
    \caption{Unfolding the linked twist map. The two square regions are labelled as $S_1$ and $S_2$. The distance between the right edge $RE_1$ of $S_1$ and the left edge $LE_2$ of $S_2$ can without loss of generality be taken to be equal to the distance between the right edge $RE_2$ of $S_2$ and the left edge $LE_1$ of $S_1$ under the identification, i.e. we consider the lobes to be symmetric. The initial segment $\gamma$ is in the square $S_1$ and $H_{s}(\gamma)$ intersects the left edge of the square $S_2$.}
    \label{fig:second}
\end{figure}
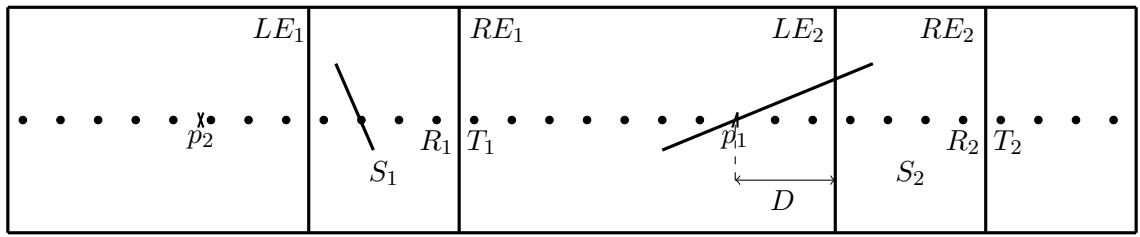

\begin{figure}
\centering
    \begin{tikzpicture}
        \draw[very thick] (-7,2) to (-7,-2);
        \draw[very thick] (-1,2) to (-7,2);
        \draw[very thick] (-1,-2) to (-7,-2);
        \draw[very thick] (-3,-2) to (-3,2);
        \draw[very thick] (-1,-2) to (-1,2);
        \node [above] at (-2,-1) {$S_2$};
        \node [above] at (-4.2,0) {$p_1$};
        \draw [thick] (-6.1,-0.9) to (-1.3,1.4);
        \draw [fill] (-6.7,0) circle [radius=0.05];
        \draw [fill] (-6.2,0) circle [radius=0.05];
        \draw [fill] (-5.7,0) circle [radius=0.05];
        \draw [fill] (-5.2,0) circle [radius=0.05];
        \draw [fill] (-4.2,0) circle [radius=0.05];
        \draw [fill] (-4.7,0) circle [radius=0.05];
        \draw [fill] (-3.7,0) circle [radius=0.05];
        \draw [fill] (-3.2,0) circle [radius=0.05];
        \draw [fill] (-2.7,0) circle [radius=0.05];
        \draw [fill] (-2.2,0) circle [radius=0.05];
        \draw [fill] (-1.7,0) circle [radius=0.05];
        \draw [fill] (-1.2,0) circle [radius=0.05];
        \node [left] at (-2.9,1.65) {$LE_1$};
        \node [right] at (-1,1.65) {$RE_1$};
        \node [left] at (-5.3,-0.5) {$I_{1}$};
        \draw[fill] (-5.0,-0.35) circle [radius=0.03];
        \draw[fill] (-3.5,0.35) circle [radius=0.03];
        \node [right] at (-4.7,-0.3) {$I'_{2}$};
        \node [left] at (-3.5,0.35) {$I''_{2}$};
        \node [above] at (-3.2,0.45) {$I_{3}$};
        \node [above] at (-2.2,0.95) {$I_{4}$};

    \end{tikzpicture} 
    \caption{An expanded picture of the case of first return to the square $S_2$ as depicted in \cref{fig:second}, and identifications of the segments $I_1,I_2',I_2'',I_3,I_4$ used in the analysis. We have that the disjoint union of the segments $I'_2 \cup I''_{2}=I_{2}.$}
    \label{fig:third}
\end{figure}
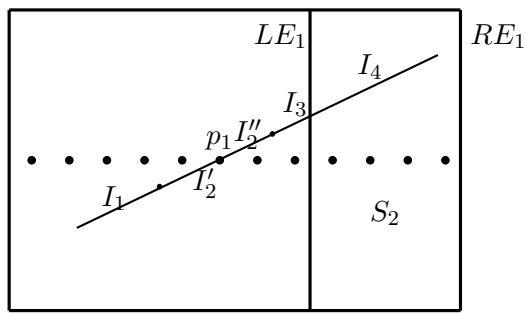

Consider without loss of generality that the segment $\gamma$ lies within the cone $C$ within $S$. It will be enough to separately consider the two subcases in Sections 2.1.1 and 2.1.2 .

\subsubsection{First return of $\gamma$ intersects only one square}

First, we are interested in the dynamics where, the first return of $\gamma$ under the iterations under the $H$ map intersects just one of the two squares $S_1, S_2$. There are two possibilities here: either it intersects one of the left edges $LE_1$ or $LE_2$ or that it intersects one of the right edges $RE_1$ or $RE_2$. It will be enough to consider the case shown in Figures 7 and 8, where it intersects one of the left edge of one of the two squares $S_1,S_2$, and the analysis in the other case is identical.

For the sake of argument, from now on we consider the `unfolded' linked twist map shown in \cref{fig:second}, where the segment under first return intersects the left edge of the square $S_2$. The segment $\gamma$ itself lies in the square $S_1$. This segment now belongs to some $H^{m_1}(\gamma)$.



Consider the situation in \cref{fig:second}. We divide the segment $H^{m_1}(\gamma)$ into four distinct parts $I_1,I_2,I_3,I_4$ as before, and we show that it is enough that we require the following conditions stated later.

Following \cite{Prz}, as in \cite{Pat}, we define the distance $d$ as follows: we are looking for the unique integer $q$ such that $1/q < \alpha l_v (I_2)$ and $1/(q-1) \geq \alpha l_v (I_2)$, which is $q= \lfloor \frac{1}{\alpha l_v (I_2)} +1 \rfloor$. Given the segment $I_2$, let the vertical endpoints of $I_2$ be $y_1,y_2=y_1+l_{v}(I_2)$. In this case, under a forward iterate of the map, one endpoint moves forward by a distance of $\alpha y_1$ and the other endpoint moves forward by a distance $\alpha(y_1 +l_v(I_2))$. We seek a point on $I_2$ such that this point moves under this iteration of the map by a rational amount of $p/q$ with $\alpha y_1 \leq p/q < \alpha(y_1 +l_v(I_2))$, with $q=\lfloor \frac{1}{\alpha l_v (I_2)} +1 \rfloor$ as above. Clearly such a point would exist. The period of such a orbit, depending on whether we find $p,q$ coprime, is some divisor of $q$ and the distance between succesive points of the orbits is some multiple of $d=\frac{1}{\lfloor \frac{1}{\alpha l_v (I_2)} +1 \rfloor}$ and thus at least this value. We could have chosen a larger value of $q$ to make the distance $d$ smaller, but this would need stronger shear than that needed with a value of $q$ that is the smallest possible.

We claim it is enough in this case if we have the requirement that there exists a constant $\delta>1$ such that all four of the following hold.

\begin{align}\label{eq:eq1}
    d\geq 2\delta l_{v}(\gamma) \\
    \alpha l_{v}(I_{2}'' \cup I_{3}) \geq \delta l_{v}(\gamma). \\
    \alpha l_{v}(I_1 \cup I'_2) +l_{h}(I_1\cup I'_2) \geq 3\delta l_v(\gamma) \\
    l_{h}(I''_2\cup I_3) \geq \delta l_v(\gamma) 
\end{align}
 

We outline the arguments below. As mentioned earlier, we assume that the segment $\gamma$ under consideration is inside the square $S_1$. We consider the specific point $p_1$ on the segment $I_2$ that has a rational orbit with $d$ being the distance between the nearest points of the orbit.

The case where this first return is to the square $S_1$ is entirely analogous to the analysis here. 

The cases of the second return happening to $S_1$ and the second return happening to $S_2$ have to be treated slightly differently, and that will be apparent in the argument below. 

The distance from $p_1$ to $LE_2$ is denoted $D$ as shown in \cref{fig:second}.

Consider the points of the rational orbit in the set $T=\{x: x \notin S_1 \cup S_2, d(x,LE_2)\in (\text{[max}(0,D-d/2) ,D+d/2)\  \text{or} \ d(x,LE_1)\in (\text{[max}(0,D-d/2) ,D+d/2)  \}$. By construction there is at most one point of the rational orbit in the distance range $[\max(0, D-d/2),D+d/2)$ from $LE_1$, and exactly the point $p_1$ of the rational orbit in the distance range $[\max(0, D-d/2,D+d/2))$ from $LE_2$.

We are interested in the first time the orbit of the point $p_1$ under the map $H$ returns to within a distance at most $D$ to the left of $LE_1$ or to the left of $LE_2$, or within any of the two squares itself. We call this point the point of second return. Consider the symmetrically placed point $p_2$ at a distance exactly $D$ from the left edge $LE_2$ of $S_2$, which in general is not part of the rational orbit.

\bigskip

Case(i): Suppose that this point of second return is to a point $Q$ such that $d(Q,LE_1)<(D-d/2)$ to the left of $LE_1$, in case $D>d/2$, and if such a point exists. In this case, there is always at least a horizontal length $D=l_h(I_2^{''}\cup I_3)$ (in fact a bigger length if at least one further iteration has taken place in between) that has not been cut off prior to returning to the distance at most $D$ from either of $LE_1$ or $LE_2$. $Q$  is at least a distance $d/2$ away from the point $p_2$, and $d/2<l_h(I_2^{''}\cup I_3)=D$ by construction in this case, thus a horizontal length at least $d/2$ has been pushed inside the square $S_2$.

\bigskip 

Case(ii): Now suppose instead that at the second return, the point of the orbit $H{^k}(p_1)$ for some $k\geq 1$ lies in the distance range $(\max(0,D-d/2),D)$ from the left edge $LE_1$ of $S_1$, or equivalently, at a distance less than or equal to $d/2$ to the right of $p_2$, and call this point $Q^{'}$. In this case, the successive horizontal lengths outside $S_1 \cup S_2$, as the orbit moves from $p_1$ to $Q'$, with at least one point in between $p_1$ and $Q'$, is always at least $\text{min}\big(d/2 + l_h(I_2^{''}\cup I_3), l_h(I_2^{''}\cup I_3)+\alpha l_v(I''_{2}\cup I_3)\big).$ This is because, the nearest point of the orbit to the left of $Q'$ exactly a distance $d$ from $Q'$, is at least a distance $d/2$ away from $p_2$. In the case we move directly from $p_1$ to $H(p_1)=Q'$ we will have at least a length $\alpha l_v(I_2^{''}\cup I_3)$ that is inserted inside the central square region $S_1$, or an h-segment in which case we are done, otherwise if there are further iterations in between, which means $H^{k}(p_1)=Q'$ for some $k\geq 2$, then from the above argument, we would still have a length of $d/2$ inserted inside the square $S_{1}$.\footnote{Note that if $k\geq 2$ in this case, then the successive `vertical lengths' $l_v(I_{k})$, say, that get cut off outside $S_1 \cup S_2$ become successive smaller, and we cannot control uniformly over $k$ the lengths of horizontal stretches $\alpha l_{v}(I_{k})$.}

\bigskip

Case(iii): In case the second return is to the region between $p_1$ and $LE_2$, the point of return is at least at a distance $d$ to the right of $p_1$, and since till that point we always have a horizontal length $D=l_{h}(I''_2 \cup I_3)$ outside of $S_1 \cup S_2$, with $D>d$ by construction in this case, a horizontal length at least $d$ would be inserted inside the square $S_2$.

\bigskip

Case(iv): Consider the case when the second return of $p_1$ is to a point inside $S_1\cup S_2$, except for the points $R_1$ or $R_2$. The amount of horizontal length within the square is at least $\text{min}(d,l_{h}(I''_2\cup I_3))+\alpha l_v(I''_{2}\cup I_3)$. The term $d$ appears since $H^{k}(p_1)$ might be the point just to the left of $R_1$ or $R_2$ and either of $R_1,R_2$ may be arbitrarily close to the right edges $RE_1$ or $RE_2$. In either case, we would be done.

\bigskip

 Case(v): If the second return is to $R_1$ (or equivalently $R_2$) in that case again, we would either have at least an amount $l_{h}(I''_2\cup I_3))$ to the right of the point $R_1$, in which case we are done because of Equation 8, or the segment is cut off by the right edge $RE_1$. This is because starting from $p_1$, the orbit $H^{k}(p_1)$ for $k\geq 1$ may pass through a point a distance $D+\epsilon$ to the left of $LE_1$ for arbitrarily small $\epsilon$, and then reach the point $R_1$ and thus only possibly a horizontal distance of $l_h(I''_2\cup I_3))$ of the segment to the right of $R_1$ being inserted into the square $S_1$.

Consider in particular the case where we have segments that are intersecting either of $RE_1$ or $RE_2$. Because of the equation $(7)$ above, it must happen that when first the point is at $T_1$ and the part of the segment to the left of $T_1$ gets cut off by the right edge of $S_1$, \footnote{the case where it first gets cut off by $S_{2}$ first is also entirely analogous} either a horizontal length greater than $\delta l_v(\gamma)$ gets cut off inside the square $S_1$, in which case we are done, otherwise a part greater than or equal to $2\delta l_v(\gamma)$ gets cut off outside $S_{1}\cup S_{2}$. If this segment gets cut off by the right edge of the square $S_{2}$ again when the point is at $T_2$, then if a horizontal length $\delta l_v(\gamma)$ gets cut off inside the square $S_2$ we are again done, otherwise a portion at least $\delta l_v(\gamma)$ gets cut off outside $(S_1 \cup S_2)$. Now the point cannot again return to $T_1$ nor to $T_2$, and in this case by hypothesis, the point returns to $R_1$ prior to reaching any other point within $S_1 \cup S_2$ or within a distance $D$ to the left of either $S_1$ or $S_2$. \footnote{which may both be arbitrarily close to the right edge} Then at least a segment of length $\delta l_v(\gamma)$ remains within one of the central square regions or touches the left edge of the particular central square. 

If the segment touches both the right and left edges, we would be done with a complete h-segment within the square, otherwise from the argument in the previous two paragraphs, we would still have a segment of length at least $\delta l_{v}(\gamma)$ inserted within the square $S_1$ and we would also be done.

\bigskip


\subsubsection{First return of $\gamma$ intersects both $S_1$, $S_2$}
 
 In the case the first return of $\gamma$ has intersection with both squares $S_1$ and $S_2$, as shown in \cref{fig:third},because of the symmetry, the distance between the right edge of $S_1$ and the left edge of $S_2$ is the same as the distance between the right edge of $S_2$ and the left edge of $S_1$. Suppose that the return was as shown in Figure 2, where the return has a segment $I_1$ in the square $S_1$, a segment $I_2$ in between the right edge of $S_1$ and the left edge of $S_2$, and finally a segment $S_3$ in the square $S_2$. The case where the return is between the right edge of $S_2$ and the left edge of $S_1$ is analogous.

In this case, it is enough that one of the following three holds,

\begin{align}
    l_h(I_1)\geq \delta l_v(\delta),\\
    l_{h}(I_3)\geq \delta l_v (\gamma) \\
    l_h (H(I_{2}))-l_{h}(I_2)\geq 2\delta l_v(\gamma), 
\end{align}

since this means that either one of the segments within the two squares are long enough, or that the horizontal length of image of the segment $I_{2}$ under the map $H$, increases by at least $2\delta l_{v}(\gamma)$ and so now at least one part of it has a segment that intersects either one of the squares with length at least $\delta l_{v}(\gamma)$.\footnote{Unlike for the usual toral linked twist map as considered in \cite{Prz}, it is not enough to only require an increase by an amount $\delta l_v(\gamma)$; it can happen that the image of $I_{2}$ goes from $RE_2$ to $LE_1$ in which case we would need an increment of at least $2\delta l_v(\gamma)$ to ensure a sufficient portion is pushed into either of the two squares.}

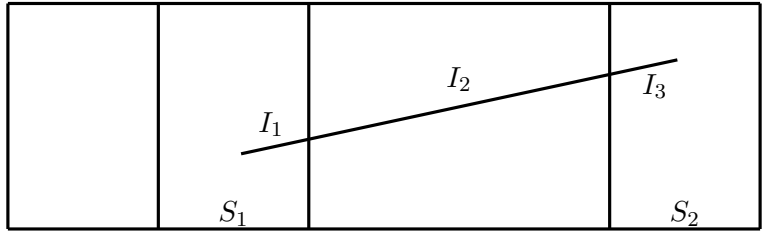
\begin{figure}
\centering
    \begin{tikzpicture}
        \draw[very thick] (-5,1.5) to (-5,-1.5);
        \draw[very thick] (5,1.5) to (5,-1.5);
        \draw[very thick] (5,1.5) to (-5,1.5);
        \draw[very thick] (5,-1.5) to (-5,-1.5);
        \draw[very thick] (3,-1.5) to (3,1.5);
        \draw[very thick] (-3,-1.5) to (-3,1.5);
        \draw[very thick] (-1,-1.5) to (-1,1.5);
        \node [below] at (-2,-1) {$S_1$};
        \node [below] at (4,-1) {$S_2$};
        \draw[very thick] (-1.9,-0.5) to (3.9,0.75);
        \node [above] at (-1.5,-0.4) {$I_1$};
        \node [above] at (1,0.2) {$I_2$};
        \node [below] at (3.6,0.7) {$I_3$};
        
    \end{tikzpicture} 
    \caption{The case where the return is to both the squares $S_1$ as well as $S_2$.}
    \label{fig:third}
\end{figure}


\subsection{Showing the existence of a critical shear parameter:} We work with the Equations (7-13) and show the existence of a critical shear parameter $\alpha_0$ such that for all $\alpha\geq \alpha_0$ we have ergodicity for the linked shear map.

It would clearly be enough to show that all the following hold: 

\begin{align}
        d\geq 2\delta l_{v}(\gamma) \\
    \alpha l_{v} (I_{3}) \geq \delta l_{v}(\gamma). \\
    \alpha l_{v}(I_1)  +l_{h}(I_1) \geq 3\delta l_v(\gamma) \\
    l_{h}(I_3) \geq \delta l_v(\gamma) \\
    l_{h}(I_4) \geq \delta l_{v}(\gamma)
\end{align}

For equation (14) to hold, it is enough to show that 

\begin{align}
    \frac{\alpha l_{v}(I_{2})}{1+\alpha l_v(I_{2})}\geq 2\delta l_{v}(\gamma) \\
    \implies l_{v}(I_{2})\geq \frac{2\delta l_{v}(\gamma)}{\alpha(1-2\delta l_v(\gamma))}
\end{align}

We can always chose a $\delta>1$ such that $(1-2\delta l_{v}(\gamma))$ is positive; since the width of the strands of $\mathbb{T}$ are arbitrarily small and thus the vertical length $l_{v}(\gamma)$ is also arbitrarily small compared to $1/2$.

Further, from elementary geometry, we note that since the segment $I_2$ is within the cone $C$ or $C'$ depending on which square we are in, we must have that $l_{h}(I_{3})\geq l_{v}(I_{3})(\alpha+L)$. Thus for equation 13 and equation 15 to hold, it is enough to have that

\begin{align}
    l_{v}(I_{3})\geq \frac{\delta l_{v}(\gamma)}{\alpha+L} \ \Big( \geq \frac{\delta}{\alpha}l_{v}(\gamma) \Big)
\end{align}

(Note that clearly we have $\alpha> L$.)

Further, by an identical argument as above for the segment $I_1$, to satisfy equation 16 it is enough to have that 

\begin{align}
    l_{v}(I_{1})\geq \frac{3\delta l_{v}(\gamma)}{L+2\alpha}
\end{align}

Further, equation 18 is satisfied if we have  

\begin{align}
    l_{v}(I_{4})\geq \frac{\delta l_{v}(\gamma)}{\alpha+L}.
\end{align}

Thus we can find a $\delta>1$ satisfying the above relations, and either that segments $I_1,I_2,I_3$ satisfy equations (20-22), or that $I_4$ satisfies equation 23, if we ensure that 

\begin{align}
    l_{v}(\gamma) >l_{v}(\gamma)\Big( \frac{2}{\alpha+L}+\frac{3}{2\alpha+L}+\frac{2}{\alpha(1-2\l_{v}(\gamma))}\Big) 
\end{align}

Recall that $L=-\frac{\alpha}{2}+\sqrt{(\frac{\alpha}{2})^{2}-1}$.

We are precluding the possibility of having an $h$- segment after the segment $\gamma$ suffers just one shear. Since our lobes are symmetric and the length of $\mathbb{T}$ is unity, the width $w\ll L$, we can find some small enough $\eta$ such that $l_{v}(\gamma)(L+\alpha)<1/2+\eta$. Taking a crude estimate of $\eta=1/4$\footnote{depending on how small $w$ is in comparison with $L$, we can correspndingly take $\eta$ an order of magnitude smaller.}, we have an estimate of $l_{v}(\gamma)<\frac{3}{4(\alpha+L)}$. In that case, it will be enough to ensure that:

\begin{align}
    1 >\Big( \frac{2}{\alpha+L}+\frac{3}{2\alpha+L}+\frac{2}{\alpha(1-\frac{3}{2(\alpha+L)})}\Big),
\end{align}

in which case we could satisfy the estimate in equation 24. The equation above has a solution set of all shear parameters $\alpha>\alpha_1=6.23$.

In case we have to satisfy the set of equations 11 to 13, by arguments similar to ones used above, we aim to ensure:

\begin{align}
    l_{v}(I_{1})\geq \frac{\delta l_{v}(\gamma)}{(\alpha+L)}\\
    l_{v}(I_{3})\geq \frac{\delta l_{v}(\gamma)}{(\alpha+L)}\\
    l_{v}(I_{2})\geq \frac{2\delta l_{v}(\gamma)}{\alpha}
\end{align}

It is enough to ensure the following:

\begin{align}
    l_{v}(\gamma)=\sum\limits_{i=1}^{3}l_{v}(I_{i})\geq l_{v}(\gamma)\Big( \frac{2}{\alpha +L}+\frac{2}{\alpha}\Big),
\end{align}

i.e.

\begin{align}
    1>\frac{2}{\alpha+L}+\frac{2}{\alpha}.
\end{align}

The above is also ensured for all $\alpha> \alpha_2=4.13$. 

Thus combining the two cases above, we get an optimal constant of $6.23$, and ergodicity and the Bernoulli property is established for all $\alpha>\alpha_0 =6.23$.

\section{Acknowledgements:} The author is thankful to Boris Hasselblatt and Curtis Heberle for discussions on this problem, and also to Feliks Przytycki for useful feedback on this question. The author is supported as a PhD student in University of Missouri at the time of writing of the manuscript.

\bigskip

\appendix
\section{\small{Variation of hyperbolicity with the angle between the two lobes}} 

In the surgery process, it is also natural to consider the lobes of the surgery track to be aligned so that they overlap not necessarily perpendicularly as was assumed throughout the analysis in this paper, but at a certain angle other than $\pi/2$.

For simplicity we only work with a constant shear of $A>2$. We show that we have hyperbolic behavior when the angle $\alpha$ between the two lobes is anywhere between $\pi/2$ and $0$. Further, we show the two eigenvalues of the resulting matrix converge to $1$ as $\alpha\to 0$ and thus in this case hyperbolicity is lost. When the angle $\alpha$ increases from $0$ to $\pi/2$, the smaller eigenvalue less than $1$ decreases while the bigger eigenvalue greater than $1$ increases. 

We revert to a coordinate system at the angle $\alpha$ to the $x-$axis, apply the shear

$$\begin{pmatrix}1& A \\ 0& 1\end{pmatrix},$$

then revert back to the original coordinate system.\footnote{In the process, we are neglecting a possible flip of coordinates, which is not a problem since eventually we will multiply the rotation matrix twice and thus for this rotation we would eventually multiply by $(-1)^2$.}

\begin{figure}[h]
\centering
\includegraphics[width=0.6\textwidth]{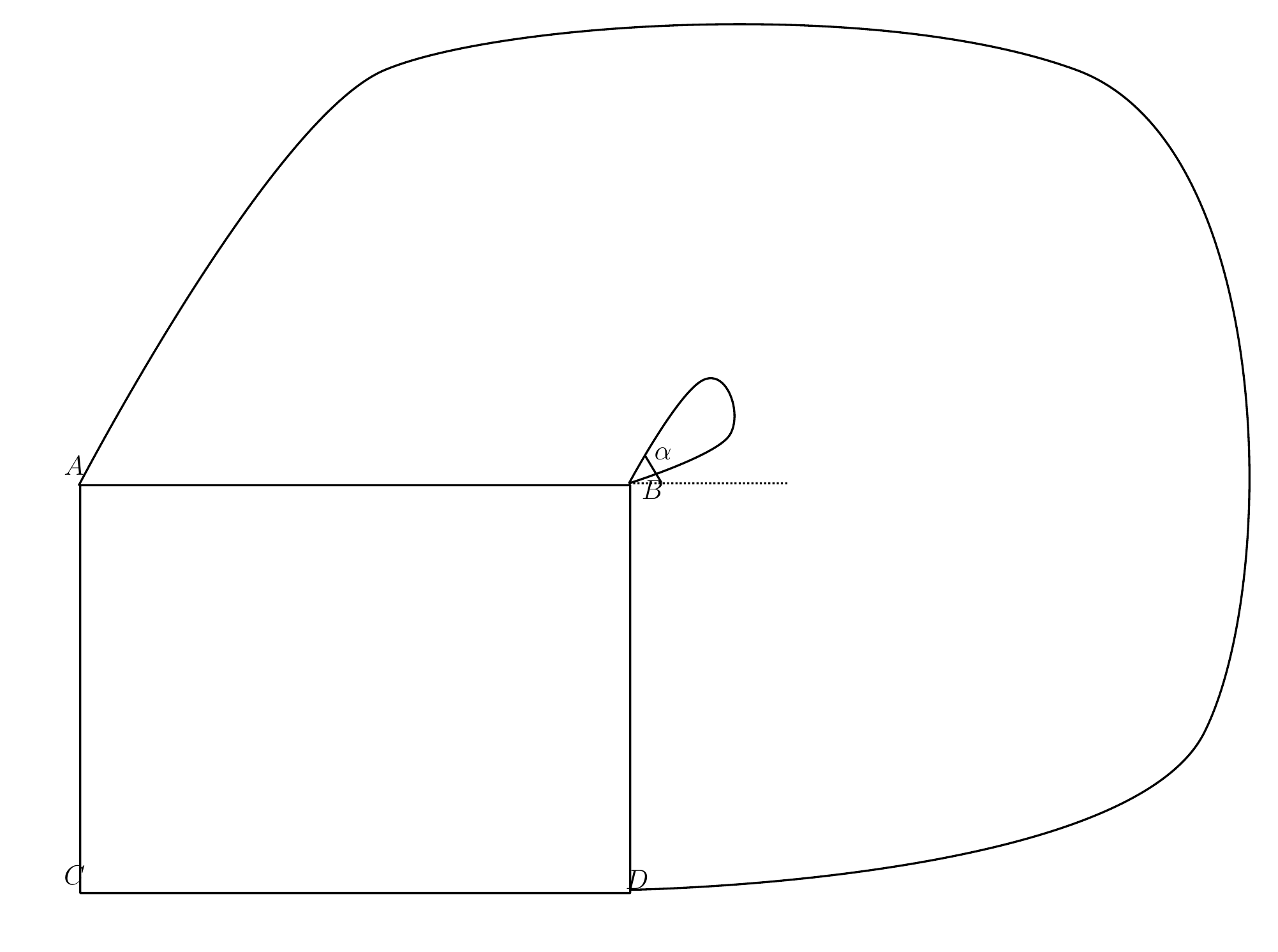}
\caption{The angle $\alpha$ between the planar projections of the two lobes varies between $0$ and $\pi/2$.}
\label{fig:inksone3}
\end{figure}

The effect of the two shears is thus given by:

\begin{align}
    \begin{pmatrix}
        \cos \alpha & \sin \alpha
\\ -\sin \alpha & \cos \alpha    \end{pmatrix}\begin{pmatrix}1& A \\ 0& 1\end{pmatrix}\begin{pmatrix}
        \cos \alpha & -\sin \alpha
\\ \sin \alpha & \cos \alpha  \end{pmatrix}\begin{pmatrix}1& A \\ 0& 1\end{pmatrix}\\
=\begin{pmatrix} 1+A\sin \alpha \cos \alpha & A+A\cos^2 \alpha +A^2\sin\alpha\cos\alpha\\ A\sin^{2}\alpha & 1+A^2\sin^2 \alpha +A\sin \alpha \cos \alpha \end{pmatrix}
\end{align}

The two eigenvalues of this matrix are given by:

\begin{align}
    \lambda_{\pm}=1+\frac{(A^2\sin^2 \alpha +2A\sin\alpha\cos\alpha) \pm\sqrt{(A^2\sin^2 \alpha +2A\sin\alpha\cos\alpha)^2+4A^2 \sin^2 \alpha}}{2}\\
\end{align}

It is seen that for all $\alpha\in [0,\pi/2]$ and any $A$, we have $\lambda_{+}\geq1$ and $\lambda_{-}\leq 1$, with $\lambda_{\pm}=1$ when $\alpha=0$, and further for any fixed $A$, $\lambda_+$ is increasing as $\alpha\to \pi/2$\footnote{For a $A$, we find local maximum values of $\lambda_+$ for values slightly smaller than $\pi/2$ while the value at $\pi/2$ is still close to this local maximum value.} while $\lambda_-$ is decreasing in $\alpha$ and tending to $0$ as $\alpha\to \pi/2$, as $A$ becomes large.

\end{document}